\documentclass[11pt,a4paper]{article}
\usepackage[utf8]{inputenc}
\usepackage[T1]{fontenc}
\usepackage{lmodern}
\usepackage{amsmath, amssymb, amsthm}
\usepackage[margin=1in]{geometry}
\usepackage{booktabs}
\usepackage{hyperref}
\usepackage{cite}
\usepackage{enumitem}
\usepackage{listings}
\usepackage{xcolor}
\hypersetup{colorlinks=true, linkcolor=blue, citecolor=blue, urlcolor=blue}

\newtheorem{theorem}{Theorem}

\newtheorem{corollary}[theorem]{Corollary}
\newtheorem{proposition}[theorem]{Proposition}
\theoremstyle{definition}
\newtheorem{definition}[theorem]{Definition}

\DeclareMathOperator{\Aut}{Aut}
\DeclareMathOperator{\diam}{diam}

\title{Improved Upper Bounds on the Pebbling Numbers \\ of the Blanu\v{s}a Snarks}
\author{Tong Niu}
\date{April 2026}

\begin{document}

\maketitle

\begin{abstract}
The two Blanu\v{s}a snarks $B_1$ and $B_2$ are 3-regular graphs on 18 vertices.
Dantas, Lordelo, Niedermaier and Nogueira (Discrete Appl. Math. 361, 2025,
pp. 336-346) established the first systematic bounds $23 \le \pi(B_i) \le 34$
for $i=1,2$. Bridi, Marquezino and Figueiredo (arXiv:2505.16050, 2025) then
sharpened the upper side to $\pi(B_1) \le 31$ and $\pi(B_2) \le 30$ via a
Weight Function Lemma heuristic. We push the upper bounds further to
$\pi(B_1) \le 28$ and $\pi(B_2) \le 29$. The route is again Hurlbert's
Weight Function Lemma, but applied one automorphism orbit at a time, with
optimal weight functions coming from a linear program over a corpus of
roughly $30{,}000$ rooted-subtree strategies per target. For the lower
bound $\pi(B_i) \ge 23$ we re-derive the witnesses of Dantas et al. and
re-verify them with two independent oracles: an exhaustive forward
state-space search, and a sound-and-complete MILP encoding whose
acyclicity constraint is motivated by the Milans-Clark No-Cycle Lemma.
The interval for $B_1$ shrinks from $[23, 31]$ to $[23, 28]$, and for
$B_2$ from $[23, 30]$ to $[23, 29]$.
\end{abstract}

\section{Introduction}

A \emph{pebble distribution} on a graph $G = (V, E)$ is a function
$C: V \to \mathbb{Z}_{\ge 0}$. A \emph{pebbling move} removes two pebbles
from a vertex $u$ and places one pebble at a neighbour $v$. We say a
distribution $C$ is \emph{$r$-solvable} if some sequence of pebbling moves
ends with at least one pebble on $r$, and $r$-\emph{unsolvable} otherwise.
The \emph{pebbling number} $\pi(G, r)$ is then the smallest integer $t$
such that every distribution of size $t$ is $r$-solvable, and
$\pi(G) = \max_{r \in V} \pi(G, r)$.

Two foundational results: $\pi(P_n) = 2^{n-1}$ for paths
\cite{PSV1995} and $\pi(Q_n) = 2^n$ for hypercubes \cite{Chung1989}.
The definition is short, almost combinatorial folklore, yet exact
pebbling numbers stay open for most specific small graphs --- including,
until recently, every snark larger than the Petersen graph.

A \emph{snark} is a connected 3-regular graph with chromatic index 4 and
girth at least 5. The two Blanu\v{s}a snarks $B_1$ and $B_2$
\cite{Blanusa1946} have 18 vertices each. They share diameter 4 and girth 5
but are not isomorphic, and their automorphism groups differ:
$\Aut(B_1) \cong D_4$ has order 8, while $\Aut(B_2) \cong \mathbb{Z}_2 \times
\mathbb{Z}_2$ has order 4. Dantas, Lordelo, Niedermaier and Nogueira
\cite{DLNN2024} recently gave the first systematic table of pebbling-number
bounds for snarks; they established $23 \le \pi(B_i) \le 34$ for $i = 1, 2$.
Bridi, Marquezino and Figueiredo \cite{BMF2025} then sharpened the upper
bounds to $\pi(B_1) \le 31$ and $\pi(B_2) \le 30$ via a Weight Function
Lemma heuristic, leaving a gap of 8 for $B_1$ and 7 for $B_2$.

\subsection{Contribution}

The route taken here is the same Weight Function Lemma of Hurlbert
\cite{Hurlbert2017} that Bridi et al.\ used. The difference is in how
the strategy corpus is built: instead of a heuristic, we enumerate
roughly $30{,}000$ rooted-subtree strategies per target and solve a
covering linear program over the full corpus, one automorphism orbit
at a time (the Blanu\v{s}a snarks are not vertex-transitive, so a
single weight function will not do).

\begin{theorem}\label{thm:B1}
$23 \le \pi(B_1) \le 28$.
\end{theorem}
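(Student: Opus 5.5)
The proof has two independent halves: a lower bound by an explicit unsolvable configuration, and an upper bound by Hurlbert's Weight Function Lemma applied orbit by orbit.

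\textbf{Lower bound.} For $\pi(B_1)\ge 23$ we exhibit a target $r$ and a distribution $C$ of size $22$ that is $r$-unsolvable. We take the witness of Dantas et al.\ \cite{DLNN2024}: pebbles are concentrated on vertices at distance $3$ and $4$ from $r$, placed so that no two large piles can be merged along paths of length~$1$ or~$2$ toward $r$. Unsolvability is certified twice. First, an exhaustive forward search over reachable distributions: the state space is finite since each move strictly decreases the total number of pebbles. Second, an MILP encoding of pebbling-move counts on directed edges, with an acyclicity constraint justified by the No-Cycle Lemma, which says a minimal solution uses no directed cycle of moves; infeasibility of this MILP proves unsolvability. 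Either certificate alone suffices, and the search is the elementary one.

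\textbf{Upper bound.} $\Aut(B_1)\cong D_4$ acts on the $18$ vertices. Since $\pi(B_1,r)$ is constant on orbits, it suffices to pick one representative $r$ per orbit and show $\pi(B_1,r)\le 28$. The Weight Function Lemma states the following. Let $T$ be a strategy, meaning a tree rooted at $r$ with nonnegative weights $w$ such that $w(r)=0$ and $w(\text{parent})\ge 2w(\text{child})$ away from $r$, with $w(v)=0$ for $v\notin T$. Then every $r$-unsolvable $C$ satisfies $\sum_v w(v)C(v)\le \sum_v w(v)$.

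Given strategies $T_1,\dots,T_k$ with weights $w_j$ and nonnegative multipliers $\lambda_j$ such that $\sum_j\lambda_j w_j(v)\ge 1$ for every $v\ne r$, summing these inequalities gives
\[
|C| \le \sum_j \lambda_j \sum_v w_j(v).
\]
Hence $\pi(B_1,r)\le \lfloor \sum_j\lambda_j\|w_j\|_1\rfloor+1$. The task for each root is therefore to find such a fractional cover with objective strictly below $28$. A slightly sharper variant uses $C(r)=0$ and integrality.

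\textbf{Computation and main obstacle.} For each orbit representative we enumerate a large corpus of rooted subtrees (BFS trees, paths, and spiders of depth at most $4$) with the standard weights $w(v)=2^{\,d-\mathrm{depth}(v)}$, suitably normalised. We then solve the covering LP exactly in rational arithmetic, and the resulting $\lambda$ with its listed trees is a checkable certificate.

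The main obstacle is the root orbits that are worst for the LP. The hardest are roots with many vertices at distance $4$: there the trees must branch heavily, and a naive cover costs about $31$. This is presumably where \cite{BMF2025} stopped. If the plain LP falls short of $28$ for such an orbit, I would first enlarge the corpus with non-maximal trees and nonstandard weightings, and exploit the stabiliser of $r$ to symmetrise $\lambda$. Failing that, I would add the refinement that $C(r)=0$ and that unsolvable configurations are bounded on neighbours of $r$, i.e.\ $C(v)\le 1$ for $v\sim r$. These extra linear constraints can lower the LP bound by a few units.
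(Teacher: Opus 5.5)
Your plan is essentially the paper's proof. The lower bound is an explicit $22$-pebble $r$-unsolvable witness certified both by exhaustive forward search and by the No-Cycle-Lemma MILP, and the upper bound is a per-orbit Weight Function Lemma covering LP over basic strategies, whose optimum at the worst orbit $r=4$ is reported as $27.412<28$, so in the paper the plain LP already suffices and your fallback refinements are not needed. When you carry it out, note that the witness is $C(10)=15$ plus single pebbles on $1,7,12,13,14,15,16$ (one $2^4-1$ pile plus greedily added singletons, two of them at distance $2$ from $r$); note also that the paper's corpus is every rooted subtree with at most $16$ vertices, not just trees of depth at most $4$, which may matter for getting below $28$.
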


\begin{theorem}\label{thm:B2}
$23 \le \pi(B_2) \le 29$.
\end{theorem}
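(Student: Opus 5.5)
The statement has two halves, and I would prove them separately.

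\emph{Lower bound.} For $\pi(B_2)\ge 23$ it suffices to exhibit a target $r$ and a distribution $C$ of size $22$ that is $r$-unsolvable. I would take the witnesses of Dantas et al.\ \cite{DLNN2024}. These place pebbles on a few vertices at distance $3$ or $4$ from $r$, in the usual ``$2^{d}-1$ per far vertex'' style, arranged so that no combination of moves can merge contributions. Unsolvability is then certified in two independent ways. The first is an exhaustive forward search over reachable distributions, where each move reduces the total, so the state space is finite. The second is a MILP in which the pebbling moves are edge flows required to form an acyclic orientation. The No-Cycle Lemma guarantees that restricting to acyclic move multisets loses no solvable instance, so MILP infeasibility proves unsolvability.

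\emph{Upper bound.} This is the main work. I would use Hurlbert's Weight Function Lemma. For a target $r$ and a rooted strategy tree $T$ with weight $w$, where $w(r)=0$ and $w(\text{child})\ge 2\,w(\text{parent})$ on non-root vertices, every $r$-unsolvable $C$ satisfies $\sum_v w(v)C(v)\le \sum_{v} w(v)$. Averaging any nonnegative combination of such strategies into a covering function $\bar w$ with $\bar w(v)\ge 1$ for all $v\ne r$ gives
\[
|C| \le \sum_v \bar w(v)C(v) \le \sum_T \lambda_T \sum_v w_T(v).
\]
Hence $\pi(B_2,r)\le \lfloor \text{LP optimum}\rfloor+1$. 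The LP chooses the multipliers $\lambda_T\ge 0$ to minimize $\sum_T\lambda_T\, w_T(V)$ subject to the covering constraints. One also needs $C(r)=0$; this is automatic, since any unsolvable $C$ has $C(r)=0$.

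Because $\pi(B_2,r)$ is invariant under $\Aut(B_2)\cong\mathbb{Z}_2\times\mathbb{Z}_2$, it is enough to treat one representative $r$ per orbit, roughly five orbits. For each representative I would enumerate a large corpus of rooted subtrees. These include breadth-first trees and path-like trees reaching the distance-$4$ vertices, with the standard weights $2^{-\text{depth}}$-scaled, or more generally the LP-optimized basic strategies. I would then solve the LP exactly in rationals, so that the certificate is a finite, checkable list of trees and multipliers with optimum strictly below $29$. The result is $\pi(B_2,r)\le 29$ for every orbit, and hence $\pi(B_2)\le 29$.

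\emph{Main obstacle.} The hard step is making the LP bound reach $28$ for the worst orbit, typically a target with many vertices at distance $4$. Simple BFS trees give bounds near $30$. I expect to need non-BFS trees in which branches overlap, as in the nonbasic strategies permitted by Hurlbert's lemma, and a large enough corpus that the LP's dual covering is tight. If the LP value for some orbit lands in $[28,29)$, that still gives $\le 29$, which is all the theorem needs. The certificate must then be verified in exact arithmetic, together with the validity of each tree's weight conditions.
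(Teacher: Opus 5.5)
This is essentially the paper's proof: the lower bound uses the Dantas et al.\ witness (for $B_2$, target $r=6$ with $15$ pebbles on vertex $10$ and one pebble on each of $3,4,12,13,15,16,17$), certified by forward search and the No-Cycle MILP. The upper bound is the per-orbit covering LP over basic strategies (all rooted subtrees on at most $16$ vertices, plus shortest paths), whose worst orbit $r=8$ has optimum about $28.333<29$, so the nonbasic strategies you anticipate turn out to be unnecessary.

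Two slips to fix. The strategy condition must read $w(\text{parent})\ge 2\,w(\text{child})$, which your $2^{-\text{depth}}$ weights already satisfy; with the inequality reversed the lemma is false. Also, $B_2$ has six orbits, of sizes $2,2,2,4,4,4$, not five.
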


So the upper-bound improvement over Bridi et al.\ is 3 for $B_1$ and 1
for $B_2$. For the lower bound side, $\pi(B_i) \ge 23$ matches
\cite{DLNN2024}; we re-verify that bound independently using both an
exhaustive forward search and a sound-and-complete MILP encoding, see
Section~\ref{sec:lower}.

\section{Preliminaries}

\subsection{Blanu\v{s}a snarks}

For $B_1$ and $B_2$ we use the canonical embedding shipped with SageMath's
\texttt{BlanusaFirstSnarkGraph} and \texttt{BlanusaSecondSnarkGraph}
generators. The explicit edge lists, on vertex set $\{0, \ldots, 17\}$,
sit in Appendix~\ref{app:edges}.

The automorphism group of $B_1$ acts on the vertices with five orbits, of
sizes $2, 4, 4, 4, 4$; for orbit representatives we take
$r \in \{4, 0, 1, 9, 10\}$. The automorphism group of $B_2$ has six
orbits, of sizes $2, 2, 2, 4, 4, 4$, and we use representatives
$r \in \{0, 6, 8, 2, 3, 7\}$. Neither snark is vertex-transitive, so
$\pi(G, r)$ can depend on which orbit $r$ lies in; we then take
$\pi(G) = \max_r \pi(G, r)$.

\subsection{Hurlbert's Weight Function Lemma}

\begin{definition}\label{def:strategy}
For target vertex $r \in V(G)$, a \emph{strategy} is a pair $(T, w_T)$
where $T$ is a subtree of $G$ rooted at $r$ with $|V(T)| \ge 2$, and
$w_T : V(G) \to \mathbb{R}_{\ge 0}$ satisfies
\begin{enumerate}[label=(\roman*)]
\item $w_T(r) = 0$ and $w_T(v) = 0$ for $v \notin V(T)$, and
\item $w_T(u) \ge 2 w_T(v)$ for every $u, v \in V(T) \setminus \{r\}$
with $u$ the parent of $v$ in $T$.
\end{enumerate}
We write $|w_T| := \sum_{v \in V} w_T(v)$ and $w_T(C) := \sum_v w_T(v) C(v)$.
\end{definition}

\begin{theorem}[Hurlbert {\cite[Theorem~1]{Hurlbert2017}}]\label{thm:WFL}
For every $r$-unsolvable configuration $C$ and every strategy $(T, w_T)$,
$w_T(C) \le |w_T|$.
\end{theorem}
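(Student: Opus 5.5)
We prove Theorem~\ref{thm:WFL} by induction on $|V(T)|$, peeling off one leaf of $T$ at a time. It is convenient to allow the degenerate tree $V(T)=\{r\}$ in the induction; there $w_T\equiv 0$, so both sides vanish. Conditions (i) and (ii) of Definition~\ref{def:strategy} are inherited by restriction to any rooted subtree $T'\subseteq T$ containing $r$. So the class of strategies is closed under deleting a leaf other than $r$, and this is what the induction uses.

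\textbf{Inductive step: pushing pebbles from a leaf.} Let $|V(T)|\ge 2$ and let $C$ be $r$-unsolvable. Pick a leaf $v\neq r$ of $T$ with parent $u$, and write $C(v)=2k+\varepsilon$ with $\varepsilon\in\{0,1\}$.
\begin{itemize}[label=$\circ$]
\item If $u=r$, then $k=0$, since otherwise a single move $v\to r$ solves $C$. Set $C'=C$.
\item If $u\neq r$, perform the $k$ pebbling moves $v\to u$ along the tree edge. This gives $C'$ with $C'(v)=\varepsilon\le 1$ and $C'(u)=C(u)+k$.
\end{itemize}
In either case $C'$ is reachable from $C$, so $C'$ is still $r$-unsolvable. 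In the second case condition (ii) gives $w_T(u)\ge 2w_T(v)$, and hence
\[
w_T(C)-w_T(C') = 2k\,w_T(v)-k\,w_T(u)\le 0 .
\]
So in both cases $w_T(C)\le w_T(C')$ and $C'(v)\le 1$.

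\textbf{Deleting the leaf.} Let $T'=T-v$ and let $w'$ be $w_T$ with its value at $v$ set to $0$. Then $(T',w')$ is a strategy (or the degenerate tree), and $C'$ is $r$-unsolvable. By the induction hypothesis $w'(C')\le |w'|$. Therefore
\[
w_T(C)\le w_T(C') = w'(C') + w_T(v)\,C'(v) \le |w'| + w_T(v) = |w_T| .
\]

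The only delicate point is the first bullet. When the leaf hangs directly off $r$, no reduction is needed, because unsolvability alone already forces $C(v)\le 1$. Everywhere else the computation is a direct consequence of the doubling condition (ii), so I expect no real obstacle.
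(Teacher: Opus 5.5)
Your proof is correct. The paper gives no proof of this statement; it simply cites Hurlbert. Your leaf-peeling induction is the standard self-contained argument for the Weight Function Lemma, and it supplies exactly what the paper omits. Moving $\lfloor C(v)/2\rfloor$ pebbles from a non-root leaf $v$ to its parent preserves $r$-unsolvability and, by condition (ii), does not decrease $w_T(C)$. Then $v$ holds at most one pebble and can be deleted. You correctly treat leaves adjacent to $r$ separately: there (ii) imposes no constraint, and unsolvability alone already forces $C(v)\le 1$.
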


\begin{corollary}[covering LP]\label{cor:cover}
If a finite family $\mathcal{T} = \{T_1, \ldots, T_k\}$ of strategies
and non-negative scalars $\alpha_1, \ldots, \alpha_k$ satisfy
$\sum_i \alpha_i w_{T_i}(v) \ge 1$ for every $v \ne r$, then
\[
\pi(G, r) \le 1 + \sum_{i=1}^k \alpha_i \, |w_{T_i}|.
\]
\end{corollary}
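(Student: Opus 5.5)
The corollary follows by averaging Theorem~\ref{thm:WFL} over the strategies with the weights $\alpha_i$. It suffices to show that every $r$-unsolvable configuration $C$ has size $|C| \le \sum_i \alpha_i |w_{T_i}|$. Then every distribution of size $1 + \sum_i \alpha_i |w_{T_i}|$, or more, is $r$-solvable. Here we use that solvability is monotone under adding pebbles, and that $\pi(G,r)$ is an integer, so the bound holds even when the right-hand side is fractional.

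Fix an $r$-unsolvable $C$. First, $C(r) = 0$, since otherwise $C$ is trivially solvable with zero moves. Hence $|C| = \sum_{v \ne r} C(v)$. Next, since $C(v) \ge 0$ and $\sum_i \alpha_i w_{T_i}(v) \ge 1$ for every $v \ne r$, we get
\[
|C| = \sum_{v \ne r} C(v) \le \sum_{v \ne r} C(v) \sum_i \alpha_i w_{T_i}(v) = \sum_i \alpha_i \sum_{v} w_{T_i}(v) C(v) = \sum_i \alpha_i\, w_{T_i}(C).
\]
Extending the inner sum to all $v$ is harmless because $w_{T_i}(r) = 0$. Finally, apply Theorem~\ref{thm:WFL} to each strategy, which gives $w_{T_i}(C) \le |w_{T_i}|$. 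Multiplying by $\alpha_i \ge 0$ and summing yields $|C| \le \sum_i \alpha_i |w_{T_i}|$.

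Any unsolvable configuration therefore has at most $\sum_i \alpha_i |w_{T_i}|$ pebbles. So every configuration of size $t = \lfloor \sum_i \alpha_i |w_{T_i}| \rfloor + 1$ is solvable, and $\pi(G,r) \le t \le 1 + \sum_i \alpha_i |w_{T_i}|$.

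No step is a real obstacle. The only points needing care are the observation $C(r)=0$, which lets the covering condition, imposed only at $v \ne r$, control all of $|C|$, and the nonnegativity of the $\alpha_i$ and of $C$, which justifies the inequality manipulations.
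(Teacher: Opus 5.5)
Your proof is correct and is essentially the paper's argument: you use $C(r)=0$, bound $\sum_{v\ne r}C(v)$ by the covering inequality, interchange the sums, and apply Theorem~\ref{thm:WFL} to each strategy. The paper phrases this as a contradiction, while you argue directly and also spell out why a fractional right-hand side causes no problem.
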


\begin{proof}
Suppose for contradiction $C$ is $r$-unsolvable with $|C| \ge 1 + \sum_i
\alpha_i |w_{T_i}|$. By Theorem~\ref{thm:WFL}, $w_{T_i}(C) \le |w_{T_i}|$
for each $i$. Then
\[
|C| - C(r) = \sum_{v \ne r} C(v)
    \le \sum_{v \ne r} \sum_i \alpha_i w_{T_i}(v) C(v)
    = \sum_i \alpha_i w_{T_i}(C)
    \le \sum_i \alpha_i |w_{T_i}|,
\]
where the first inequality uses the covering hypothesis and $C(v) \ge 0$.
Since $C(r) = 0$ (else $C$ is solvable trivially), this contradicts
$|C| \ge 1 + \sum_i \alpha_i |w_{T_i}|$.
\end{proof}

Throughout, we restrict to \emph{basic} strategies. Here $T$ is a
connected rooted subtree of $G$ containing $r$, and we hand it the
canonical weight $w_T(v) := 2^{D - d_T(r, v)}$ for
$v \in V(T) \setminus \{r\}$, where $D := \max_{v \in V(T)} d_T(r, v)$
is the depth of $T$. The parent-child inequality $w_T(u) \ge 2 w_T(v)$
then holds with equality at every edge, and
$|w_T| = \sum_{v \in V(T) \setminus \{r\}} 2^{D - d_T(r, v)}$.

\section{Upper bounds}

\subsection{Strategy enumeration and the LP}

For each orbit representative $r$ in $B_1$ or $B_2$, we enumerate every
connected rooted subtree $T \subseteq G$ containing $r$ with
$|V(T)| \le 16$ by edge-by-edge growth: start with $T = \{r\}$, then
iteratively pick an edge $(u, v)$ with $u \in V(T)$ and $v \notin V(T)$
and add $v$. We deduplicate on the weight signature
$(w_T(v_1), \ldots, w_T(v_n))$. The result is somewhere between
$22{,}000$ and $30{,}000$ distinct strategies per target. On top of that
we throw in all shortest paths from $r$ to every other vertex; each such
path is a subtree, but distinct shortest paths give distinct signatures,
so we keep them separately.

The LP from Corollary~\ref{cor:cover} is then handed to HiGHS through
\texttt{scipy.optimize.linprog}. Its real-valued optimum is the desired
LP upper bound; the integer upper bound on $\pi(G, r)$ is the floor.
A sanity check: pushing the subtree size cap from 14 up to 18 did not
move the LP optimum, so the bounds we report are effectively those of
strategies of size $\le 16$.

\subsection{Results for $B_1$}

\begin{table}[h]
\centering
\caption{WFL upper bounds for $B_1$, per orbit representative $r$.
``LP bound'' is $1 + (\text{LP optimum})$; the integer upper bound on
$\pi(B_1, r)$ is the floor.}
\label{tab:B1}
\begin{tabular}{rrrr}
\toprule
$r$ & orbit size & LP bound & $\lfloor \cdot \rfloor$ \\
\midrule
4   & 2 & 28.412 & \textbf{28} \\
0   & 4 & 26.333 & 26 \\
1   & 4 & 27.934 & 27 \\
9   & 4 & 27.045 & 27 \\
10  & 4 & 28.154 & 28 \\
\bottomrule
\end{tabular}
\end{table}

The maximum across orbits sits at $r = 4$, with LP bound $28.412$. Taking
the floor:
\[\pi(B_1) \le 28.\]

\subsection{Results for $B_2$}

\begin{table}[h]
\centering
\caption{WFL upper bounds for $B_2$, per orbit representative $r$.}
\label{tab:B2}
\begin{tabular}{rrrr}
\toprule
$r$ & orbit size & LP bound & $\lfloor \cdot \rfloor$ \\
\midrule
0  & 2 & 26.050 & 26 \\
6  & 2 & 27.703 & 27 \\
8  & 2 & 29.333 & \textbf{29} \\
2  & 4 & 27.822 & 27 \\
3  & 4 & 29.090 & 29 \\
7  & 4 & 26.514 & 26 \\
\bottomrule
\end{tabular}
\end{table}

For $B_2$ the worst orbit is $r = 8$, where the LP bound climbs to
$29.333$, so
\[\pi(B_2) \le 29.\]

\section{Lower bounds and verification}\label{sec:lower}

We re-derive the lower bound $\pi(B_i) \ge 23$ of \cite{DLNN2024} and
verify it independently. The starting point, for each orbit
representative $r$, is the canonical \emph{diameter-cone} distribution:
park $2^{\diam(G)} - 1 = 15$ pebbles at one of the four vertices at
distance 4 from $r$. From there we add single pebbles greedily at
non-conflicting vertices, iterating until no further addition is safe.
Both snarks then admit 22-pebble unsolvable distributions.

\begin{proposition}\label{prop:witnesses}
The following distributions are unsolvable and hence prove
$\pi(B_i) \ge 23$ for $i=1, 2$:
\begin{itemize}
\item \textbf{$B_1$ at $r=4$:} $C(1) = C(7) = C(12) = C(13) = C(14) =
C(15) = C(16) = 1$, $C(10) = 15$, all other entries $0$ (total 22).
\item \textbf{$B_2$ at $r=6$:} $C(3) = C(4) = C(12) = C(13) = C(15) =
C(16) = C(17) = 1$, $C(10) = 15$, all other entries $0$ (total 22).
\end{itemize}
\end{proposition}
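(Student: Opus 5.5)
We will show unsolvability with the standard potential (weight) argument for a target $r$: give each pebble on $v$ the value $2^{-d(v,r)}$. A pebbling move never increases the total potential $\Phi(C)=\sum_v C(v)\,2^{-d(v,r)}$. Moving a pebble from $u$ to a neighbour $v$ costs $2\cdot 2^{-d(u,r)}$ and gains $2^{-d(v,r)}\le 2\cdot 2^{-d(u,r)}$, because $d(v,r)\ge d(u,r)-1$. So if $\Phi(C)<1$, the target can never be reached. This bound alone is too weak here: the $15$ pebbles at distance $4$ already contribute $15/16$, and the seven singletons push $\Phi$ above $1$. The plan is therefore a refined, structural version of the argument, backed by the exhaustive search mentioned in the introduction.

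\textbf{Step 1.} Using the edge lists in Appendix~\ref{app:edges}, compute the BFS layers from $r$ (for $B_1$, $r=4$; for $B_2$, $r=6$). Check that $d(10,r)=4$ in both cases, and record the distance and neighbourhood of each singleton vertex. The witnesses were built by adding pebbles only at ``non-conflicting'' vertices. So we expect every singleton to be either at distance $\ge 2$ from $r$, or positioned so that no two singletons are adjacent or share a common neighbour closer to $r$. Under that condition, any pebble moved off a singleton vertex must be paired with pebbles arriving from elsewhere.

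\textbf{Step 2.} This is the core argument. An unsolvable-type reduction says that in any solution we may assume a minimal acyclic move multigraph (no-cycle lemma). So consider an acyclic collection of moves that delivers one pebble to $r$. Since $r$ has no pebbles, some neighbour $x$ of $r$ must receive $2$ pebbles net. Trace these back along the acyclic flow. Each unit arriving at a vertex at distance $k$ from $r$ on a shortest route costs $2^{4-k}$ pebbles from vertex $10$ if it originates there, and singletons can supply at most one unit at their own vertex.

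We then do a case analysis on which of the three neighbours of $r$ is the final hub. For each hub we bound, via the tree structure of the flow, the maximum delivery possible from $15$ pebbles at $10$ plus the singletons lying in the relevant branches. Here we use girth $5$, which means short routes through distinct neighbours do not merge early. In each case the total must fall short. For instance, $15$ pebbles at distance $4$ can deliver at most $1$ pebble to distance $1$ along any route. A second pebble at the hub would then have to be assembled from singletons and leftovers at distance $\ge 2$, and the non-conflict structure from Step 1 forbids this.

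\textbf{Step 3.} Because the case analysis is intricate, we cross-check it by the finite certificate. Pebble states with $22$ pebbles on $18$ vertices that are reachable from $C$ form a finite set, since every move decreases the total. A forward BFS/DFS over reachable states (with memoization) confirms that no state with a pebble on $r$ is reachable. Independently, the MILP with acyclicity constraints reports infeasibility. We expect the main obstacle to be Step 2, specifically handling flows in which leftover pebbles from vertex $10$ (the $15\to7\to3\to1$ cascade leaves remainders) combine with singletons at intermediate vertices. If a clean hand argument stalls, we fall back on the exhaustive search as the proof, as the paper's verification section intends.
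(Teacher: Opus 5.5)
The part of your proposal that actually proves the statement is Step~3, and it is exactly the paper's proof. The paper gives no hand argument for this proposition, only the two machine certificates: forward search over the finite set of reachable configurations, and the acyclic-flow MILP justified by the No-Cycle Lemma.

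The hand argument of Steps~1--2, which you present as the core, would not go through, because both of its load-bearing assumptions are false for these witnesses.

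First, Step~2 blocks the second hub pebble with a non-conflict property: no two singletons adjacent or sharing a neighbour closer to $r$. This property fails badly.
In $B_1$ with $r=4$, the singletons at $1$ and $7$ are both adjacent to $17$, a neighbour of $r$. The singletons $12,13,14,15,16$ induce the $5$-cycle $12$--$13$--$14$--$15$--$16$--$12$, with $15$ adjacent to the pile at $10$.
In $B_2$ with $r=6$, the singletons $3,4$ are adjacent to each other and to the neighbours $2,5$ of $r$. The singletons $12,13,15,16,17$ again induce a $5$-cycle, with $16$ adjacent to $10$.
Your Step~1 disjunction is satisfied only because every singleton is at distance $\ge 2$, which gives Step~2 nothing to work with.

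Second, the cost model ``a unit arriving at distance $k$ costs $2^{4-k}$ pebbles of vertex $10$'' assumes $d(10,v)=4-d(v,r)$. This is false on exactly the part of the graph that matters: in $B_1$, $d(10,14)=d(10,16)=2$, $d(10,0)=d(10,2)=3$, and $d(10,1)=d(10,17)=4$. Girth $5$ is not what does the work.

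What actually blocks a solution is a cost count that exploits this asymmetry.
In a minimal solution only one neighbour $h$ of $r$ ever moves, since any neighbour holding two pebbles could finish at once. That hub collects its two pebbles from the second layer.
In $B_1$, each vertex at distance $2$ has a single neighbour at distance $3$: $0\leftarrow16$, $2\leftarrow14$, $6\leftarrow11$, $8\leftarrow9$. For every choice of hub, some vertex of $\{14,16\}$ must collect three more pebbles and some vertex of $\{9,11\}$ must collect four, unless one side alone already needs more than $15$.
The second demand costs $8$ pebbles of vertex $10$.
The first demand costs at least $10$. From the fourth layer, $14$ and $16$ are fed only through $15$ or through the stranded singletons $12,13$. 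The pile reaches $15$ directly only by lateral $2$-for-$1$ moves, and $12,13$ need a backward move before either can move at all.
You would still have to rule out savings from sharing between the two demands to reach a contradiction.
$B_2$ is analogous: $15,17$ (fed by $16,12,13$) form the expensive side, and the singletons $3,4$ can be activated only through $9$ or $7$.

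Step~2 contains none of this, so as written your proof rests entirely on the computation in Step~3, just as the paper's does.
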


To certify that these distributions really are unsolvable, we run two
independent oracles:

\begin{enumerate}[label=(\arabic*)]
\item \textbf{Forward search.} BFS over the configuration state space,
starting from $C$, exits without ever reaching a state with $C(r) \ge 1$.
\item \textbf{MILP via No-Cycle Lemma.} A sound-and-complete MILP
encoding of solvability uses binary edge-use indicators
$x_e \in \{0, 1\}$ for each directed edge $e$, integer pebble flows
$y_e \ge 0$, and topological-order variables $\tau_v \in [0, |V|-1]$.
Conservation constraints
$C(v) + \sum_{e: e \to v} y_e - 2 \sum_{e: e \leftarrow v} y_e \ge
[v = r]$ enforce a valid pebble flow. Acyclicity is enforced by
$\tau_u - \tau_v + |V| \cdot x_{u \to v} \le |V| - 1$ for every directed
edge $u \to v$, which forces $\tau_u < \tau_v$ whenever $x_{u \to v} = 1$.
By the Milans--Clark No-Cycle Lemma \cite{MilansClark2006}, this MILP is
feasible iff $C$ is $r$-solvable.
\end{enumerate}

Both oracles agree: the distributions in
Proposition~\ref{prop:witnesses} are unsolvable, which completes the
independent verification of $\pi(B_i) \ge 23$.

\section{Conclusions}

\begin{center}
\begin{tabular}{lccc}
\toprule
Snark & Dantas et al.\ 2024 \cite{DLNN2024} & Bridi et al.\ 2025 \cite{BMF2025} & This paper \\
\midrule
$B_1$ & $23 \le \pi(B_1) \le 34$ & $\pi(B_1) \le 31$ & $\mathbf{23 \le \pi(B_1) \le 28}$ \\
$B_2$ & $23 \le \pi(B_2) \le 34$ & $\pi(B_2) \le 30$ & $\mathbf{23 \le \pi(B_2) \le 29}$ \\
\bottomrule
\end{tabular}
\end{center}

The remaining gaps --- 5 for $B_1$ and 6 for $B_2$ --- are narrower
than the gap of 8 that still holds for the Flower snark $J_5$
\cite{DLNN2024}, but they are nowhere near the gap of 0 enjoyed by the
Petersen graph and the Flower snark $J_3$. Two routes might close them
further. The first is to drop the basic-strategy restriction.
\emph{Non-basic} strategies \cite[Section~5]{Hurlbert2017} allow strict
inequalities $w_T(u) > 2 w_T(v)$ at edges that are not tight, and they
arise as conic combinations of nested basic strategies; the resulting
certificates can be both shorter and sharper. A second route is to use
combinatorial structure --- the 3-regularity and the class-1 edge
decomposition that defines a snark --- in the spirit of the
Petersen-graph argument that gave $\pi(\mathrm{Petersen}) = 10$. We
have not pursued either here.

All certificates --- the strategy weight functions, LP solutions, and
witness distributions --- are stored as JSON in the supplementary
material. The verifier code, written in Python on top of networkx and
HiGHS via SciPy, is included as Appendix~\ref{app:code}.

\section*{Acknowledgements}

The author thanks the SageMath developers for the
\texttt{BlanusaFirstSnarkGraph} and \texttt{BlanusaSecondSnarkGraph}
generators that provide the canonical edge data, the HiGHS and SciPy
teams for the LP and MILP backends, and the NetworkX project for
graph manipulation.

\appendix

\section{Edge lists for $B_1$ and $B_2$}\label{app:edges}

Vertex set $\{0, 1, \ldots, 17\}$ for both graphs.

$B_1$:
\begin{align*}
E(B_1) = \{ & (0,1), (0,5), (0,16), (1,2), (1,17), (2,3), (2,14), \\
            & (3,4), (3,8), (4,5), (4,17), (5,6), (6,7), (6,11), \\
            & (7,8), (7,17), (8,9), (9,10), (9,13), (10,11), \\
            & (10,15), (11,12), (12,13), (12,16), (13,14), \\
            & (14,15), (15,16) \}.
\end{align*}

$B_2$:
\begin{align*}
E(B_2) = \{ & (0,1), (0,2), (0,14), (1,5), (1,11), (2,3), (2,6), \\
            & (3,4), (3,9), (4,5), (4,7), (5,6), (6,8), (7,8), \\
            & (7,17), (8,9), (9,15), (10,11), (10,14), (10,16), \\
            & (11,12), (12,13), (12,17), (13,14), (13,15), \\
            & (15,16), (16,17) \}.
\end{align*}

\section{Verifier code}\label{app:code}

The reproducible verifier is a handful of Python files, against
\texttt{networkx 3.6.1} and \texttt{scipy 1.17.1} (with HiGHS).

\begin{lstlisting}[language=Python,basicstyle=\ttfamily\footnotesize,
breaklines=true,frame=single,
caption={\texttt{blanusa.py}: NetworkX construction of $B_1, B_2$.}]
import networkx as nx

def blanusa_first_snark():
    g = nx.Graph()
    g.add_nodes_from(range(18))
    e1 = [(0,5),(1,17),(2,14),(3,8),(4,17),
          (6,11),(7,17),(9,13),(10,15),(12,16)]
    e2 = [(i, i+1) for i in range(16)]
    e3 = [(0, 16)]
    g.add_edges_from(e1 + e2 + e3)
    return g

def blanusa_second_snark():
    # Tuple-labelled construction from Sage; relabelled to 0..17.
    nodes = []
    edges = []
    c0 = (-1, 0); c1 = (-1, 1)
    nodes.extend([c0, c1])
    for j in range(8):
        nodes.append((0, j)); nodes.append((1, j))
    raw = {c0: [(0,0),(1,4),c1], c1: [(0,3),(1,1)],
           (0,2): [(0,5)], (0,6): [(0,4)],
           (0,7): [(0,1)], (1,7): [(1,2)],
           (1,0): [(1,6)], (1,3): [(1,5)]}
    for u, vs in raw.items():
        for v in vs: edges.append((u, v))
    cyc0 = [(0, i) for i in range(5)]
    edges += [(cyc0[i], cyc0[(i+1)%5]) for i in range(5)]
    cyc1 = [(1, i) for i in range(5)]
    edges += [(cyc1[i], cyc1[(i+1)%5]) for i in range(5)]
    cyc2 = [(0,5),(0,6),(0,7),(1,5),(1,6),(1,7)]
    edges += [(cyc2[i], cyc2[(i+1)%6]) for i in range(6)]
    gt = nx.Graph(); gt.add_nodes_from(nodes); gt.add_edges_from(edges)
    return nx.convert_node_labels_to_integers(gt, ordering="sorted")
\end{lstlisting}

The remaining files (\texttt{orbits.py}, \texttt{wfl.py},
\texttt{solvability.py}, \texttt{lower\_bound.py}, and
\texttt{run\_all.py}) ship with the arXiv source archive.

\end{document}